\newtheorem{theorem}{Theorem}
\newtheorem{lemma}[theorem]{Lemma}
\newtheorem{corollary}[theorem]{Corollary}
\newcommand{\eps}{\varepsilon}
\newcommand{\weight}{w}
\DeclareMathOperator{\rank}{rk}
\DeclareMathOperator{\PWE}{PWE}
\DeclareMathOperator{\PWV}{PWV}
\DeclareMathOperator{\subscriptsbf}{}
\newcommand{\Msbf}{{\mathcal{M}_{\subscriptsbf}}}
\newcommand{\Psbf}{{P_{\subscriptsbf}}}
\newcommand{\Osbf}{{\Omega_{\subscriptsbf}}}
\newcommand{\pisbf}{{\pi_{\subscriptsbf}}}
\DeclareMathOperator{\subscriptssf}{}
\newcommand{\Mssf}{{{\mathcal{M}}_{\subscriptssf}'}}
\newcommand{\Pssf}{{P_{\subscriptssf}'}}
\newcommand{\Ossf}{{\Omega_{\subscriptssf}'}}
\newcommand{\pissf}{{\pi_{\subscriptssf}'}}
\DeclareMathOperator{\cp}{cp}
\DeclareMathOperator{\lw}{lw}
\DeclareMathOperator{\pw}{pw}
\DeclareMathOperator{\tw}{tw}
\DeclareMathOperator{\vs}{vs}
\DeclareMathOperator{\inj}{inj}
\title{Rapid mixing of subset Glauber dynamics on graphs of bounded tree-width}
\author{Magnus Bordewich \and Ross J. Kang}
\begin{document}

\maketitle

\begin{abstract}
Motivated by the `subgraphs world' view of the ferromagnetic Ising model, we develop a general approach to studying mixing times of Glauber dynamics based on subset expansion expressions for a class of graph polynomials.  With a canonical paths argument, we demonstrate that the chains defined within this framework mix rapidly upon graphs of bounded tree-width. This extends known results on rapid mixing for the Tutte polynomial, the adjacency-rank ($R_2$-)polynomial and the interlace polynomial.

Keywords: Markov chain Monte Carlo, subset expansion, graph polynomials, tree-width, canonical paths, Tutte polynomial, interlace polynomial, random cluster model.
\end{abstract}

\section{Introduction}

We analyse a subset-sampling Markov chain on simple graphs that is derived from certain graph functions --- usually, in fact, graph polynomials. We show that this chain mixes rapidly on graphs of constant tree-width.

Throughout the paper, the graph functions $\mathcal{P}$ we consider are formulated using subset expansion\footnote{The term `subset expansion' was coined by Gordon and Traldi~\cite{GoTr90}, though it is a special type of `states model expansion' which is commonly used in physics.}.
An {\em edge subset expansion formula for $\mathcal{P}$} is written as follows: for any simple graph $G = (V,E)$,
\begin{align}
\label{eqn:polyedge}
\mathcal{P}(G)
= \sum_{S\subseteq E}
\weight((V,S))
\end{align}
for some graph function $\weight$, where $(V,S)$ denotes the graph with vertex set $V$ and edge set $S$. 
If the function $\weight$ is non-negative, that is, $\weight(G) \ge 0$ for all graphs $G$, we refer to~\eqref{eqn:polyedge} as an {\em edge subset weighting for $\mathcal{P}$} and to $\weight$ as its {\em weight function}.  In fact, we shall need the weight function to be {\em positive} on all subgraphs --- from a statistical physics viewpoint, this results in a so-called `soft-core model'.

Before moving on, let us anchor the general formula~\eqref{eqn:polyedge} with an example that is prominent in statistical physics, theoretical computer science, and discrete probability.
The {\em partition function of the random cluster model} can be defined for any $G = (V,E)$ and parameters $q,\mu$ as
\begin{align}
\label{eqn:RCpoly}
Z_{RC}(G;q,\mu) := \sum_{S\subseteq E}q^{\kappa(S)}\mu^{|S|},
\end{align}
where $\kappa(S)$ is the number of components in $(V,S)$.  For more on the random cluster model, see an extensive treatise by Grimmett~\cite{Gri06}.
Notice that, if $q, \mu \ge 0$, then $\weight((V,S)):=q^{\kappa(S)}\mu^{|S|}$ provides an edge subset weighting for $Z_{RC}(G;q,\mu)$.
Under a suitable transformation, $Z_{RC}(G;q,\mu)$ is equivalent to the Tutte polynomial~\cite{Tut54}, defined for any $G = (V,E)$ and parameters $x,y$ as
\begin{align}
\label{eqn:Tuttepoly}
T(G;x,y) := \sum_{S\subseteq E}(x-1)^{r(E)-r(S)}(y-1)^{|S|-r(S)},
\end{align}
where $r(S)$ is the $\mathbb{F}_2$-rank of the incidence matrix for $(V,S)$.
A wealth of combinatorial and structural information can be obtained from evaluations of this function. Indeed, this polynomial has a remarkable universality property, which informally speaking says that it subsumes any graph invariant that can be computed by deletion and contraction of edges~\cite{OxWe79}, cf.~\cite{WeMe00}.  In addition, the Tutte polynomial specialises to several key univariate graph polynomials, including the chromatic polynomial of  Birkhoff~\cite{Bir12}. 
It specialises to the Jones polynomial in knot theory~\cite{Jon85}.
By its connection with the random cluster model, it also generalises the partition functions of the Ising~\cite{Isi25} and Potts~\cite{Pot52} models\footnote{If $x,y\ge1$ or $q,\mu\ge0$, then, respectively, $T(G;x,y)$ or $Z_{RC}(G;q,\mu)$ generalise the partition functions of the {\em ferromagnetic} Ising and Potts models.}. Consult the monograph of Welsh~\cite{Wel93} for more on these crucial connections.
In addition to $Z_{RC}(G;q,\mu)$ and $T(G;x,y)$, we shall highlight a few other specific polynomials from the literature, but for a broad account of the development of graph polynomials, consult the recent surveys by Makowsky~\cite{Mak08} and by Ellis-Monaghan and Merino~\cite{ElMe11a,ElMe11b}.

It was shown in 1990 by Jaeger, Vertigan and Welsh~\cite{JVW90} that, in general, for fixed (rational) values of $x$ and $y$, the evaluation of $T(G;x,y)$ is \#P-hard, except on a few special points and curves in the $(x,y)$-plane.  As a result, there have been substantial efforts since then to pin down the {\em approximation complexity} of computing $T(G;x,y)$. For large swaths of the $(x,y)$-plane, it is now known that the computation of $T(G;x,y)$ either does not admit a fully polynomial-time randomised approximation scheme (FPRAS) unless RP = NP, or is at least as hard as \#BIS (the problem of counting independent sets in bipartite graphs) under approximation-preserving reductions, cf.~Goldberg and Jerrum~\cite{GoJe10a}.
The sole positive approximation result applicable to general graphs is the breakthrough FPRAS by Jerrum and Sinclair~\cite{JeSi90,JeSi93} for the partition function of the ferromagnetic Ising model --- this corresponds to computation of $T(G;x,y)$ along the portion of the parabola $(x-1)(y-1)=2$ with $y > 1$.
Various approaches have given efficient approximations in some regions of the Tutte plane for specific classes of graphs --- cf.~e.g.~Alon, Frieze and Welsh~\cite{AFW94,AFW95}, Karger~\cite{Kar95,Kar01}, and Bordewich~\cite{Bor04}.
To obtain their seminal result, Jerrum and Sinclair used a Markov chain Monte Carlo (MCMC) method, a principal tool in the design of efficient approximation schemes for counting problems.
MCMC methods are widespread in computational physics, computational biology, machine learning, and statistics.
There have been steady advances in our understanding of such random processes and in showing how quickly they produce good approximations of useful probability distributions in huge, complex data sets.
See the lecture notes of Jerrum~\cite{Jer03} or a survey by Randall~\cite{Ran06} for an overview of the application of these techniques in theoretical computer science.

We postpone the precise statement of our main result, Theorem~\ref{thm:mainedge,tw}, as it requires a host of definitions, but here we give a cursory description.
In this paper, we are interested in the rate of convergence to stationarity of a natural Markov chain closely associated to a subset weighting of $\mathcal{P}$ (of form~\eqref{eqn:polyedge}), when some mild restriction is placed upon the weight function $\weight$. 
That restriction --- which we have dubbed {\em $\lambda$-multiplicative} --- is described in Subsection~\ref{subsec:mild}: for now, we remark that some important graph polynomials and partition functions from statistical physics (e.g.~$Z_{RC}(G;q,\mu)$ and $T(G;x,y)$) obey it.
The state space of our chain is the set of all edge subsets, upon which we have set up a MCMC method using Glauber dynamics~\cite{Gla63}.
Each possible transition in the chain is either the addition or deletion of exactly one edge to/from the subset and the transition probabilities are defined according to the weights $\weight((V,S))$, subject to a Metropolis-Hastings filter~\cite{Has70,MRRTT53}\footnote{A Metropolis-Hastings filter is applied in order to ensure that the resulting process is a reversible Markov chain and thus guaranteed to converge to a unique stationary distribution with state probabilities proportional to the weight.}.
Our main finding is that on graphs of bounded tree-width this Markov chain converges to the stationary distribution in time that is polynomial in the number of vertices of the graph.

Our approach is inspired in part by the `subgraphs world' in which Jerrum and Sinclair~\cite{JeSi90,JeSi93} designed their FPRAS for the partition function of the ferromagnetic Ising model.
It is also motivated by recent work of Ge and \v{S}tefankovi\v{c}~\cite{GeSt09+,GeSt10}, who introduced the $R_2$-polynomial in an attempt to devise a FPRAS for \#BIS.  
Their {\em adjacency-rank polynomial} is defined for any $G = (V,E)$ and parameters $q,\mu$ as
\begin{align}
\label{eqn:R2poly}
R_2(G;q,\mu) := \sum_{S\subseteq E}q^{\rank_2(S)}\mu^{|S|},
\end{align}
where $\rank_2(S)$ is the $\mathbb{F}_2$-rank of the {\em adjacency} matrix for $(V,S)$.
Using a combinatorial interpretation of $\rank_2$ applicable only to bipartite graphs, they showed that the edge subset Glauber dynamics (using the weighting in~\eqref{eqn:R2poly}) mixes rapidly on trees. 
They conjectured that the chain mixes rapidly on all bipartite graphs, cf.~Conjecture~1 in~\cite{GeSt09+}. In addition, Ge and \v{S}tefankovi\v{c} showed that the Markov chain for the (soft-core) random cluster model --- i.e.~weighted according to~\eqref{eqn:RCpoly} --- mixes rapidly upon graphs of bounded tree-width. We have extended both of these results under a unified framework. In particular, we show that the $R_2$-polynomial fits in our framework without recourse to the combinatorial interpretation for bipartite graphs, and hence that the Markov chain for the $R_2$-polynomial mixes rapidly upon all graphs of bounded tree-width.
We also remark here that the conjectured rapid mixing of this chain on all bipartite graphs was disproved by Goldberg and Jerrum~\cite{GoJe10b}.

The polynomials and Markov chains that we capture in our framework are defined for any graph; however, we obtain rapid mixing results only on graphs of constant tree-width.
For brevity, we will not define tree-width here, but merely say that it is an essential concept in structural graph theory and parameterised complexity --- see modern surveys on the topic by Bodlaender~\cite{Bod06} and Hlin\v{e}n\'y et al.~\cite{HOSG08}. 
The restriction of tree-width is commonly used in graph algorithms to reduce the complexity of a computationally difficult problem, usually by way of dynamic programming.
For example, it is already known that many of the polynomials covered here can be evaluated efficiently for graphs of bounded tree-width.  Independently, Andrzejak~\cite{And98} and Noble~\cite{Nob98} exhibited polynomial-time algorithms to compute the Tutte polynomial of graphs with bounded tree-width.  Works of Makowsky and Mari\~{n}o~\cite{MaMa03} and Noble~\cite{Nob09} have significantly generalised this, in the former case, to a wide array of polynomials under the framework of monadic second order logic (MSOL), and, in the latter case, to the so-called $U$-polynomial~\cite{NoWe99}, a polynomial that includes not only the Tutte polynomial but also a powerful type of knot invariant as a special case.

Even though many of the polynomials we refer to can be computed exactly in polynomial time for graphs of bounded tree-width, it remains of interest to show that the associated Glauber dynamics is rapidly mixing. One hope is that for some polynomials the chain mixes rapidly for a wider class of graphs. There have been significant and concerted endeavours by researchers spanning physics, computer science and probability to determine the mixing properties of Glauber dynamics on many related Markov chains.  Spin systems have been of particular interest; indeed, the main thrust of the work of Jerrum and Sinclair~\cite{JeSi90,JeSi93} was to tackle the partition function for the `spins world' of the ferromagnetic Ising model (using a translation to the rapidly mixing `subgraphs world').  For more on the connections among the `spins world', the `subgraphs world' and the `random cluster world', see the recent work of Huber~\cite{Hub09+}.
We note that many recent projects on spin systems have been restricted to trees or tree-like graphs, cf.~e.g.~\cite{BKMP05,DeMo10,DLP10,GJK10,MSW07,TVVY10}.

Our primary focus in this paper is to establish results for polynomials defined according to {\em edge} subset expansion, but we can also extend our methodology to polynomials defined according to {\em vertex} subset expansion, which may be viewed as the `induced subgraphs world'. 
To our knowledge, this form of Markov chain has not been greatly examined, but it handles one important graph polynomial that was recently introduced by Arratia, Bollob\'as and Sorkin~\cite{ABS04b}:
the {\em bivariate interlace polynomial}, defined for any graph $G = (V,E)$ and parameters $x,y$ as
\begin{align}
\label{eqn:interpoly}
q(G;x,y) := \sum_{S\subseteq V}(x-1)^{\rank_2(S)} (y-1)^{|V|-\rank_2(S)},
\end{align}
where $\rank_2(S)$ is the $\mathbb{F}_2$-rank of the adjacency matrix for $G[S]$.
This polynomial specialises to the independence polynomial and is intimately related to Martin polynomials~\cite{AiHo04}.
Just as for the Tutte polynomial, computation of the bivariate interlace polynomial is \#P-hard in almost the entire plane, as was shown by Bl\"aser and Hoffmann~\cite{BlHo08}.
The multivariate interlace polynomial, a generalisation of the interlace polynomial, can be evaluated efficiently for graphs of bounded tree-width, cf.~Courcelle~\cite{Cou08} and Bl\"aser and Hoffmann~\cite{BlHo09,BlHo11+}.  Subject to a condition on the vertex subset weightings, which we have called {\em vertex $\lambda$-multiplicativity}, we can establish rapid mixing for vertex subset Glauber dynamics on graphs of constant tree-width.

For all of our results, we need that the weight function is strictly positive for all (induced) subgraphs. Many of the classical enumeration polynomials such as the matching, independence, clique and chromatic polynomials are captured by the general polynomials that we mention as examples throughout this work. However, these are `hard-core models', in which some (induced) subgraphs have a zero weighting, and hence are not included in our approach. Many of these are evaluations that fall at the boundary of the regions that we can handle. For example, the Tutte polynomial evaluated at the point $(2,1)$ counts the number of forests of the graph. We have shown rapid mixing at all fixed points $(2,1+\delta$), for $\delta>0$, with a mixing time that depends on $\delta$. It would be interesting to consider whether the chains associated with these boundary points mix rapidly for graphs of bounded tree-width.

\medskip

The structure of this paper is as follows.  In the next section, we give the definitions that are necessary for a detailed description of the main theorem.  We give the main theorem in Section~\ref{sec:results} and then indicate some of its consequences.  We present the proofs in Section~\ref{sec:proof}.  In Section~\ref{sec:induced}, we extend our results to Glauber dynamics on vertex subsets, that is, on induced subgraphs.

\section{Definitions}\label{sec:definitions}

\subsection{$\lambda$-multiplicative weight functions}\label{subsec:mild}

In this subsection, we describe the condition we require on our graph functions $\mathcal{P}$.
This condition prescribes that the weight function is multiplicative with respect to the operation of disjoint graph union as well as ``nearly multiplicative'' with respect to the operation of composition via small vertex cuts.

We use the notation $\hat\lambda := \max\{\lambda,1/\lambda\}$. For a graph $G = (V,E)$, a vertex cut $K$ is said to separate sets $V_1$ and $V_2$ if $(V_1,K,V_2)$ is a partition of $V$ and there is no edge of $E$ that is incident to both a vertex of $V_1$ and a vertex of $V_2$.
A partition $(E_1,E_2)$ of $E$ is appropriate (for $K$) if $E_1$ has no edge adjacent to a vertex in $V_2$ and $E_2$ has no edge adjacent to a vertex in $V_1$.

For fixed $\lambda > 0$, we say that the weight function $\weight$ is {\em $\lambda$-multiplicative}, if for any $G = (V,E)$, any vertex cut $K$ that separates sets $V_1$ and $V_2$, and any appropriate partition $(E_1,E_2)$, we have
\begin{align}\label{eqn:edgekmult}
\hat\lambda^{-|K|} \le \frac{\weight((V_1\cup K,E_1))\weight((V_2\cup K,E_2))}{\weight(G)} \le \hat\lambda^{|K|}.
\end{align}
As mentioned above, if $\weight$ is $\lambda$-multiplicative, then it follows that $\weight$ is multiplicative with respect to disjoint union (by taking $K = \emptyset$); furthermore, taking $V_2 = \emptyset$ implies that the addition or deletion of a few edges in the graph does not change $\weight$ wildly.

\subsection{Examples of valid polynomials}\label{subsec:kmultex}

In this subsection, we emphasise specific examples of edge subset weightings and justify that their weight functions are $\lambda$-multiplicative.

Let $G = (V,E)$ be any graph, $K$ be any vertex cut that separates vertex subsets $V_1$ and $V_2$, and $(E_1,E_2)$ be any appropriate partition. 
We define $G'$ to be the disjoint union of graphs $(V_1\cup K,E_1)$ and $(V_2\cup K,E_2)$. We could imagine forming $G'$ from $G$ by splitting each vertex in $K$, taking incident edges in $E_1$ with one copy of the vertex and those in $E_2$ with the other.
It is trivial to verify multiplicativity with respect to disjoint union for each of the weight functions considered below.
Therefore, to establish $\lambda$-multiplicativity for these weight functions, it will suffice to verify that 
$\hat\lambda^{-|K|} \le \weight(G')/\weight(G) \le \hat\lambda^{|K|}$.

First, we observe that the partition function of the \emph{random cluster model} for $q,\mu > 0$ satisfies the condition. Recalling~\eqref{eqn:RCpoly},
the relevant weight function is $\weight((V,S)) := q^{\kappa(S)}\mu^{|S|}$.
To handle the $\mu^{|S|}$ factor, note that the graphs $G$ and $G'$ have the same number of edges.
For the $q^{\kappa(S)}$ factor, the number of components in $G'$ can be at most $\kappa(G)+|K|$ since $G'$ can be obtained by splitting $|K|$ vertices of $G$. Thus, $\weight$ is $\lambda$-multiplicative if we take $\lambda := q$. 

This can also be seen in the context of the \emph{Tutte polynomial} when $x,y>1$. Recalling~\eqref{eqn:Tuttepoly},
the relevant weight function is $\weight((V,S)) := (x-1)^{r(E)-r(S)}(y-1)^{|S|-r(S)}$.
As before, it is easy to take care of the $(x-1)^{r(E)}(y-1)^{|S|}$ factor.
For the remaining $((x-1)(y-1))^{-r(S)}$ factor, it is enough to observe that the incidence matrix of $G$ may be obtained from the incidence matrix of $G'$ as follows. The matrix for $G'$ has two rows for each of the vertices in $K$, one from $(V_1\cup K,E_1)$ and one from $(V_2\cup K,E_2)$. If we replace one of these two rows with the sum of the two rows, we do not alter the rank; if we then delete the other of the two rows, we change the rank by at most $1$. Doing this for each vertex in $K$, we obtain the incidence matrix for $G$, at a total change in the rank $r$ of the incidence matrix of at most $|K|$. Thus, $\weight$ is $\lambda$-multiplicative if we take $\lambda := (x-1)(y-1)$.

Next, we see that the {\em adjacency-rank polynomial} of Ge and \v{S}tefankovi\v{c}~\cite{GeSt09+} satisfies the condition if $q,\mu>0$. Recalling~\eqref{eqn:R2poly}, the relevant weight function is  $\weight((V,S)) := q^{\rank_2(S)}\mu^{|S|}$.
As before, it is simple to handle the $\mu^{|S|}$ factor.
For the $q^{\rank_2(S)}$ factor, we note that the adjacency matrix of $G$ may be formed from the adjacency matrix of $G'$ by $|K|$ row additions, followed by $|K|$ column additions and finally the deletion of $|K|$ rows and $|K|$ columns. Since we must delete both rows and columns, the rank $\rank_2$ of the adjacency matrix may change by up to $2|K|$. Thus, in this case, $\weight$ is $\lambda$-multiplicative when taking  $\lambda := q^2$.

Now, consider the {\em multivariate Tutte polynomial} as formulated by Sokal~\cite{Sok05}, defined for any graph $G = (V,E)$ and parameters $q,\vec{v}=\{v_e\}_{e\in E}$ by
\begin{align}
\label{eqn:multiTuttepoly}
Z_{Tutte}(G;q,\vec{v}) := \sum_{S\subseteq E}q^{\kappa(S)}\prod_{e\in S}v_e.
\end{align}
Under this expansion, $\weight := q^{\kappa(S)}\prod_{e\in S}v_e$ is an edge subset weight function if $q > 0$ and $v_e>0$ for any $e \in E$ are fixed. We can handle the $q^{\kappa(S)}$ factor as we did for the random cluster model partition function. For the $\prod_{e\in S}v_e$ factor, observe that $G$ and $G'$ have the same set of edges. Thus, $\weight$ is $\lambda$-multiplicative when taking $\lambda := q$.

Last, we discuss the {\em $U$-polynomial} of Noble and Welsh~\cite{NoWe99}, defined for any graph $G = (V,E)$ and parameters $y,\vec{x}=\{x_i\}_{i=1}^{|V|}$ by
\begin{align}
\label{eqn:Upoly}
U(G;\vec{x},y) := \sum_{S\subseteq E}(y-1)^{|S|-r(S)}\prod_{i=1}^{|V|}{x_i}^{\kappa(i,S)},
\end{align}
where $\kappa(i,S)$ denotes the number of components of order $i$ in $(V,S)$.
If $y>1$ and $x_i>0$ for all $i$, then $\weight((V,S)) := (y-1)^{|S|-r(S)}\prod_{i=1}^{|V|}{x_i}^{\kappa(i,S)}$ gives an edge subset weighting.
The $(y-1)^{|S|-r(S)}$ factor can be handled as above.
For the $\prod_{i=1}^{|V|}{x_i}^{\kappa(i,S)}$ factor, observe that $\sum_i |\kappa(i,G)-\kappa(i,G')|$ is at most $3|K|$, since, if we obtain $G'$ by splitting the vertices in $K$, each time we split a vertex we either change the size of a single component or split a single component into two smaller components. Thus, taking $x' := \max_i \max\{x_i,x_i^{-1}\}$ and $y' := \max\{y-1,(y-1)^{-1}\}$, we see that $\weight$ is $\lambda$-multiplicative when taking $\lambda := y'{x'}^3$.

\subsection{Glauber dynamics for edge subsets}\label{subsec:chains}

\noindent
In this subsection, we define the Markov chain associated with the edge subset expansion formula for $\mathcal{P}$. From the formulation in~\eqref{eqn:polyedge}, the {\em single bond flip chain} $\Msbf$ on a given graph $G = (V,E)$ is defined as follows.  We start with an arbitrary subset $X_0 \subseteq E$ and repeatedly generate $X_{t+1}$ from $X_t$ by running the following experiment.
\begin{enumerate}
\item Pick an edge $e\in E$ uniformly at random and let $S = X_t \oplus \{e\}$.
\item Set $X_{t+1} = S$ with probability $\frac12\min\left\{1,\weight((V,S))/\weight((V,X_t))\right\}$ and with the remaining probability set $X_{t+1} = X_t$.
\end{enumerate}
By convention, we denote the state space of $\Msbf$ by $\Osbf$ (i.e.~$\Osbf = 2^E$) and its transition probability matrix by $\Psbf$.
With standard arguments, it can be shown that $\Msbf$ is a reversible Markov chain 
that has a unique stationary distribution $\pisbf$ satisfying $\pisbf(S) \propto \weight((V,S))$. 
Hence, we may use $\Msbf$ as a Markov chain in MCMC sampling for the following problem.

\medskip
\noindent
$\PWE(\mathcal{P})$: \textsc{$\mathcal{P}$-weighted Edge Subsets}\\
\indent Input: a graph $G = (V, E)$.\\
\indent Output: a subset $S\subseteq E$ with probability $\weight((V,S))/\mathcal{P}(G)$.
\medskip

The term \emph{rapidly mixing} applies to a Markov chain that quickly converges to its stationary distribution. We make this precise here. 
The {\em total variation distance} $\Vert\nu - \nu'\Vert_{TV}$ between two probability distributions $\nu$ and $\nu'$ is defined by
$\Vert\nu - \nu'\Vert_{TV} = \frac12\sum_{H\in\Omega}|\nu(H)-\nu'(H)|$.
For $\eps>0$, the {\em mixing time} of a Markov chain $\mathcal{M}$ (with state space $\Omega$, transition matrix $P$ and stationary distribution $\pi$) is defined as
\begin{align*}
\tau(\eps) := \max_{H\in\Omega}\{\min\{t \ | \ \Vert P^t(H,\cdot)-\pi(\cdot) \Vert_{TV} \le \eps\}\}.
\end{align*}
In this paper, we shall say that a chain $\mathcal{M}$ {\em mixes rapidly} if, for any fixed $\eps$, $\tau(\eps)$ is (upper) bounded by a polynomial in the number of vertices of the input graph. This definition for rapid mixing is the one more commonly used in theoretical computer science, whereas often in statistical physics or discrete probability a stricter $O(n \log n)$ bound is mandated.

\section{Results}\label{sec:results}

We are now prepared to state the main theorem.

\begin{theorem}\label{thm:mainedge,tw}
Let $G=(V,E)$ where $|V| = n$.  If $\weight$ is $\lambda$-multiplicative for some $\lambda > 0$, then the mixing time of $\Msbf$ on $G$ satisfies
\begin{align*}
\tau(\eps) = O\left(n^{4+4(\tw(G)+1)|\log\lambda|}
\log(1/\eps)
\right)
\end{align*}
(where $\tw(G)$ denotes the tree-width of $G$).
\end{theorem}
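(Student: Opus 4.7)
The plan is to establish rapid mixing via the canonical paths (Sinclair) method, with paths and an injective encoding tailored to a tree decomposition of $G$. Fix a tree decomposition $(T, \{B_x\}_{x \in V(T)})$ of $G$ of width $\tw(G)$, root $T$ at an arbitrary node, and assign every edge $e = uv \in E$ to some bag $B_{x(e)}$ with $\{u, v\} \subseteq B_{x(e)}$. Traverse $T$ in post-order and list the edges at each visited bag in a fixed internal order; the resulting total order $\prec$ on $E$ has the key property that, for every rank $r$, the bag containing the rank-$r$ edge is a vertex cut of $G$ of size at most $\tw(G) + 1$ separating the endpoints of strictly lower-ranked edges from those of strictly higher-ranked edges (a consequence of the Helly-type subtree property of tree decompositions).

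For each ordered pair $(I, F) \in \Osbf \times \Osbf$, define the canonical path $\gamma_{I,F}$ by flipping the edges of $I \oplus F$ in $\prec$-order; its length is at most $|E| \le n^2$. For a transition $t = (A, A \oplus \{e^*\})$ on $\gamma_{I,F}$, let $Q$ be the already-flipped portion of $I \oplus F$ and $R = (I \oplus F) \setminus Q$ the remaining portion, so that $A = I \oplus Q$; define the encoding $\eta(I, F, t) := I \oplus R$. Given $t$, the map $(I, F) \mapsto \eta$ is injective: $A \oplus \eta$ recovers $I \oplus F$, intersecting this with the $\prec$-prefix up to $e^*$ recovers $Q$ and hence $R$, whence $I = A \oplus Q$ and $F = A \oplus R$. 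Taking $K = B_{x(e^*)}$ as the promised separator of size at most $\tw(G)+1$, a short case check shows that on one side of $K$ the restrictions of $A$ and $\eta$ match those of $F$ and $I$, respectively, and on the other side they match those of $I$ and $F$; hence $\{A, \eta\}$ and $\{I, F\}$ decompose across $K$ into exactly the same four local pieces. Four applications of~\eqref{eqn:edgekmult} then give
\[
\weight((V, I))\,\weight((V, F)) \le \hat\lambda^{4(\tw(G)+1)}\,\weight((V, A))\,\weight((V, \eta)).
\]

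Combining this weight bound with the injectivity of $\eta$, the path-length bound $|E| \le n^2$, and the estimate $\Psbf(A, A') \ge 1/(2|E|\hat\lambda^{O(1)})$ (itself a consequence of~\eqref{eqn:edgekmult} applied with $V_2 = \emptyset$ and $K$ the endpoints of $e^*$), Sinclair's formula yields congestion $\rho \le n^{O(1)}\hat\lambda^{O(\tw(G)+1)}$. Plugging this into $\tau(\eps) = O(\rho(\log(1/\min_S \pisbf(S)) + \log(1/\eps)))$, together with the bound $\log(1/\min_S \pisbf(S)) = O(|E||\log \lambda|)$ obtained by iterating~\eqref{eqn:edgekmult} along an edge-by-edge path, and coarsely replacing each factor $\hat\lambda^c$ by $n^{c|\log\lambda|}$, produces the claimed mixing time. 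The main obstacle will be the simultaneous decomposition step: verifying that the single cut $B_{x(e^*)}$ genuinely serves as an \emph{appropriate} separator in the sense of Subsection~\ref{subsec:mild} for all four graphs $(V, I), (V, F), (V, A), (V, \eta)$ with matching edge partitions, so that four separate applications of~\eqref{eqn:edgekmult} can be chained without any stray factors beyond $\hat\lambda^{4(\tw(G)+1)}$.
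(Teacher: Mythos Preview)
Your central claim --- that a post-order traversal of a rooted tree decomposition yields an edge ordering in which, at every position, the bag $B_{x(e^*)}$ is a vertex cut separating the endpoints of earlier edges from those of later edges --- is false, and this is the point at which the argument breaks. Consider a path $v_1v_2v_3v_4$ with the width-$1$ tree decomposition whose bags are $\{v_1,v_2\}$, $\{v_2,v_3\}$, $\{v_3,v_4\}$ along a path, rooted at the middle bag. A post-order visits the two leaf bags first; after processing $v_1v_2$ you move to $v_3v_4$, whose bag $\{v_3,v_4\}$ is supposed to separate $\{v_1,v_2\}$ (earlier endpoints) from $\{v_2,v_3\}$ (later endpoints). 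It does not: $v_2$ lies on both sides and is not in the bag. More generally, any ordering with your property would certify $\lw(G)\le \tw(G)+1$, contradicting the well-known gap $\pw(G)=\Theta(\tw(G)\log n)$ witnessed already by complete binary trees (which have tree-width~$1$ but path-width $\Theta(\log n)$). Consequently the ``four applications of~\eqref{eqn:edgekmult}'' step cannot go through with a cut of size $\tw(G)+1$.

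This also explains why your final bookkeeping step (``coarsely replacing each factor $\hat\lambda^{c}$ by $n^{c|\log\lambda|}$'') looks artificial: if your separator claim held, your congestion bound $n^{O(1)}\hat\lambda^{O(\tw(G)+1)}$ would already be a fixed polynomial in $n$ for bounded tree-width, strictly stronger than the theorem. The paper does not attempt this. It instead works with \emph{linear-width}: an ordering of linear-width $\ell$ genuinely provides, at every position, a cut of size at most $\ell$ (this is the definition), and Lemma~\ref{lem:fdiffedge} then gives $\weight(I)\weight(F)\le \hat\lambda^{4\ell}\weight(H)\weight(C)$, whence $\varrho(\Gamma_\sigma)\le 2m^2\hat\lambda^{4\ell}$ (Theorem~\ref{thm:mainedge,lw}). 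Only afterwards is tree-width introduced, via the inequality $\lw(G)\le \pw(G)+1\le (\tw(G)+1)(\lfloor\log_2 n\rfloor+1)+1$ from~\eqref{eqn:width}; it is precisely this extra $\log n$ factor in the exponent of $\hat\lambda$ that produces the $n^{4(\tw(G)+1)|\log\lambda|}$ term in the statement. Your canonical-paths/encoding machinery is essentially the same as the paper's; what is missing is the correct width parameter (linear-width or path-width) at the separator step, together with~\eqref{eqn:width} to pass to tree-width.
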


In Subsection~\ref{subsec:kmultex}, we noted some examples of polynomials that have $\lambda$-multiplicative weight functions; thus, Theorem~\ref{thm:mainedge,tw} implies that each of their associated Glauber dynamics on edge subsets is rapidly mixing upon graphs of bounded tree-width.

\begin{corollary}
\label{cor:consequences}
Let $G = (V,E)$ where $|V| = n$.
In the following list, we state conditions on the parameters which guarantee rapid mixing of the single bond flip chain on $G$ associated with the stated polynomial and weighting.  We also state the mixing time bound.
\begin{enumerate}
\item For fixed $q,\mu>0$ and the weighting~\eqref{eqn:RCpoly} of $Z_{RC}(G;q,\mu)$, the mixing time satisfies
\begin{align*}
\tau(\eps) = O\left(n^{4+4(\tw(G)+1)|\log q|}
\log(1/\eps)
\right).
\end{align*}
Equivalently, for fixed $x,y>1$ and the weighting~\eqref{eqn:Tuttepoly} of $T(G;x,y)$, the mixing time satisfies
\begin{align*}
\tau(\eps) = O\left(n^{4+4(\tw(G)+1)|\log ((x-1)(y-1))|}
\log(1/\eps)
\right).
\end{align*}
\item For fixed $q,\mu>0$ and the weighting~\eqref{eqn:R2poly} of $R_2(G;q,\mu)$, the mixing time satisfies
\begin{align*}
\tau(\eps) = O\left(n^{4+8(\tw(G)+1)|\log q|}
\log(1/\eps)
\right).
\end{align*}
\item For fixed $q>0$ and $v_e>0$ for all $e$ and the weighting~\eqref{eqn:multiTuttepoly} of $Z(G;q,\vec{v})$, the mixing time satisfies
\begin{align*}
\tau(\eps) = O\left(n^{4+4(\tw(G)+1)|\log q|}
\log(1/\eps)
\right).
\end{align*}
\item For fixed $y>1$ and $x_i>0$ for all $i$ and the weighting~\eqref{eqn:Upoly} of $U(G;\vec{x},\mu)$, the mixing time satisfies
\begin{align*}
\tau(\eps) = O\left(n^{4+4(\tw(G)+1)\left|\log \left(y'{x'}^3\right)\right|}
\log(1/\eps)
\right)
\end{align*}
where $x' = \max_i \max\{x_i,x_i^{-1}\}$ and $y' = \max\{y-1,(y-1)^{-1}\}$.
\end{enumerate}
\end{corollary}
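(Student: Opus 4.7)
The plan is to prove Corollary~\ref{cor:consequences} as a direct application of Theorem~\ref{thm:mainedge,tw}, treating each of the four items as a separate instantiation with the appropriate value of $\lambda$. For each polynomial, Subsection~\ref{subsec:kmultex} already verified that the given weighting is $\lambda$-multiplicative for an explicit $\lambda > 0$, so all that remains is to substitute this $\lambda$ into the exponent $4 + 4(\tw(G)+1)|\log\lambda|$ from the theorem and simplify.

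More concretely, the steps I would carry out are as follows. For item~(1), the random cluster weighting is $\lambda$-multiplicative with $\lambda = q$ and the Tutte weighting (for $x,y > 1$) with $\lambda = (x-1)(y-1)$, so both bounds come by plugging these into Theorem~\ref{thm:mainedge,tw}; the equivalence of the two bounds reflects the standard change of variables $q = (x-1)(y-1)$ between the two formulations. For item~(3), the multivariate Tutte weighting is $\lambda$-multiplicative with $\lambda = q$, identical to~(1). For item~(4), the $U$-polynomial weighting is $\lambda$-multiplicative with $\lambda = y'{x'}^3$, and substitution is immediate. For item~(2), the adjacency-rank weighting is $\lambda$-multiplicative with $\lambda = q^2$ (the factor of $2$ arising from the row-and-column symmetry of the adjacency matrix), so
\[
|\log\lambda| = |\log q^{2}| = 2|\log q|,
\]
which is exactly what produces the $8(\tw(G)+1)|\log q|$ in place of $4(\tw(G)+1)|\log q|$ in the exponent.

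Since the $\lambda$-multiplicativity of each weighting has already been established in Subsection~\ref{subsec:kmultex} and Theorem~\ref{thm:mainedge,tw} is assumed, there is no real obstacle to overcome: the only thing to be careful about is bookkeeping the correct $\lambda$ for each polynomial and handling the $q^2$ versus $q$ distinction in item~(2). The proof is therefore short, essentially a table of substitutions into the statement of Theorem~\ref{thm:mainedge,tw}.
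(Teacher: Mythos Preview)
Your proposal is correct and matches the paper's approach exactly: the corollary is stated immediately after noting that Subsection~\ref{subsec:kmultex} established $\lambda$-multiplicativity for each weighting, and no separate proof is given beyond this implicit substitution into Theorem~\ref{thm:mainedge,tw}. Your bookkeeping of the specific $\lambda$ values (in particular $\lambda=q^{2}$ for item~(2) yielding the factor~$8$) is accurate.
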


Here, we remark that Ge and \v{S}tefankovi\v{c} obtained part~1 above and showed part~2 above in the special case of trees.  Parts~2--4 directly extend these findings, and our main theorem considerably broadens the scope of mixing time bounds for subset Glauber dynamics on graphs of bounded tree-width.

\section{Proofs}\label{sec:proof}

Let us first give an outline of the proof.

Although our main result is stated in terms of tree-width, we do not treat tree-width directly but instead use linear-width, a more restrictive width parameter introduced by Thomas~\cite{Tho96}. This strategy was also employed by Ge and \v{S}tefankovi\v{c} in the two specific cases mentioned above. For any graph $G=(V,E)$, an ordering  $(e_1,\dots,e_m)$ of $E$ has linear-width at most $\ell$, if, for each $i \in \{1,\dots,m\}$, there are at most $\ell$ vertices that are incident to both an edge in $\{e_1,\dots,e_{i-1}\}$ and an edge in $\{e_i,\dots,e_m\}$. 
The {\em linear-width $\lw(G)$} of $G = (V,E)$ is the smallest integer $\ell$ such that there is an ordering of $E$ with linear-width at most $\ell$. 
The motive for using linear-width is that it implies an ordering of the edges which we can then use to define canonical paths between pairs of edge subsets. Then we show that $\lambda$-multiplicativity is the general condition under which we can  bound the congestion of these canonical paths.  The use of canonical paths is a standard technique for obtaining a bound on the mixing time of MCMC methods --- see the lecture notes of Jerrum~\cite{Jer03} for an expository account of this approach.

The key property we require that relates the linear-width of $G$ to the more well-studied parameters path-width $\pw(G)$ and tree-width $\tw(G)$ of $G$ is the following set of inequalities, details of which can be found in Bodlaender~\cite{Bod88}, Chung and Seymour~\cite{ChSe89}, Fomin and Thilikos~\cite{FoTh06}, Ge and \v{S}tefankovi\v{c}~\cite{GeSt09+}, and Korach and Solel~\cite{KoSo93}.
For any graph $G$ on $n$ vertices,
\begin{align}
\label{eqn:width}
\pw(G) \le \lw(G) \le \pw(G) + 1 \le (\tw(G)+1)(\lfloor \log_2 n \rfloor+1)+1.
\end{align}

We follow a canonical paths strategy to bound the mixing time of $\Msbf$.
Given $G = (V,E)$, let $\sigma = (e_1,\dots,e_m)$ be an ordering of $E$.  Given $I,F \in \Osbf$, let $I\oplus F$ denote the symmetric difference of $I$ and $F$, let $\sigma[I\oplus F] := (e_{i_1},\dots,e_{i_k})$ denote the restriction of $\sigma$ to $I\oplus F$ (that is, $\{e_{i_1},\dots,e_{i_k}\} = I\oplus F$ and $i_1<\cdots<i_k$), and let $\gamma_{\sigma,I\to F}$ denote the {\em canonical path} from $I$ to $F$, defined as
\begin{align*}
\gamma_{\sigma,I\to F} := (H_0,\dots,H_k),
\end{align*}
where $H_0 = I$, $H_j=H_{j-1}\oplus \{e_{i_j}\}$ for all $j\in\{1,\dots,k\}$ (and hence $H_k =F$).  Let $\Gamma_\sigma = \{\gamma_{\sigma,I\to F} \ | \ I, F \in \Omega\}$.

To bound the mixing time of $\Msbf$, we will, for some appropriately chosen $\sigma$, bound the {\em congestion} $\varrho(\Gamma_\sigma)$ of the canonical paths, which is defined by
\begin{align}
\label{eqn:congestion}
\varrho(\Gamma_\sigma) :=
\max_{\substack{(H,H'):\\P(H,H')>0}}
\left\{
\frac{1}{\pi(H)P(H,H')}
\sum_{\substack{I,F:\\(H,H')\in\gamma_{\sigma,I\to F}}}
\pi(I)\pi(F)|\gamma_{\sigma,I\to F}|
\right\},
\end{align}
where $|\gamma_{\sigma,I\to F}|$ denotes the length of the path $\gamma_{\sigma,I\to F}$.  
The mixing time can then be bounded using the following inequality of Sinclair~\cite{Sin92} --- see also Diaconis and Stroock~\cite{DiSt91}:
for any set $\Gamma$ of canonical paths,
\begin{align}
\label{eqn:mixingcongestion}
\tau(\eps) \le \max_{H\in\Omega}\left\{\varrho(\Gamma)\cdot\left(\log\frac1{\pi(H)}+\log \frac1\eps\right) \right\}.
\end{align}

The remainder of the section is devoted to showing the following.

\begin{theorem}\label{thm:mainedge,lw}
Suppose $G = (V,E)$ has linear-width $\ell$ and let $\sigma = (e_1,\dots,e_m)$ be an ordering of $E$ with linear-width at most $\ell$.  If $\weight$ is $\lambda$-multiplicative for some $\lambda > 0$, then $\varrho(\Gamma_\sigma) \le 2m^2\hat\lambda^{4\ell}$.
\end{theorem}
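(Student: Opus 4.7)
I will follow the canonical-paths route sketched in the excerpt, bounding the congestion at each directed transition $(H,H')$ of $\Msbf$ via an encoding in the spirit of Jerrum and Sinclair. Fix such a transition, so that $H'=H\oplus\{e_i\}$ for some $i$, and let $T(H,H')$ denote the set of pairs $(I,F)$ whose canonical path traverses $(H,H')$. Set $L:=\{e_1,\dots,e_{i-1}\}$ and $R:=\{e_i,\dots,e_m\}$, and let $K_i$ (with $|K_i|\le\ell$) be the vertex cut supplied by the linear-width ordering, separating vertices only incident to $L$ from those only incident to $R$; write $V_1,V_2$ for the two sides and $w_L(\cdot),w_R(\cdot)$ for the weight functions $\weight((V_1\cup K_i,\cdot))$ and $\weight((V_2\cup K_i,\cdot))$ respectively. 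By the definition of the canonical path, any $(I,F)\in T(H,H')$ satisfies $F\cap L=H\cap L$ and $I\cap R=H\cap R$: the edges of $I\oplus F$ in $L$ are flipped to their $F$-values before step $e_i$, whereas those in $R\setminus\{e_i\}$ are flipped afterwards. The free data is $A:=I\cap L$, arbitrary in $L$, and $B:=F\cap R$, arbitrary in $R$ subject only to $e_i\in B\iff e_i\in H'$. The encoding $\eta(I,F):=(I\oplus F)\oplus H$ simplifies to $A\cup B$, which gives an injection of $T(H,H')$ into $\Osbf$.

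The heart of the argument is the key inequality
\begin{align*}
w(I)w(F)\le \hat\lambda^{4|K_i|}\,w(\eta(I,F))\,w(H),
\end{align*}
which follows from four applications of $\lambda$-multiplicativity with the cut $K_i$ (the partition $(X\cap L,X\cap R)$ of any subgraph $(V,X)$ is appropriate for $K_i$, since no edge of $L$ touches $V_2$ and no edge of $R$ touches $V_1$). Upper-bounding $w(I)$ and $w(F)$ gives $w(I)\le\hat\lambda^{|K_i|}w_L(A)w_R(H\cap R)$ and $w(F)\le\hat\lambda^{|K_i|}w_L(H\cap L)w_R(B)$; rearranging the corresponding lower bounds for $\eta(I,F)$ and $H$ gives $w_L(A)w_R(B)\le\hat\lambda^{|K_i|}w(\eta(I,F))$ and $w_L(H\cap L)w_R(H\cap R)\le\hat\lambda^{|K_i|}w(H)$; multiplying and cancelling yields the display. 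Summing over $T(H,H')$, using injectivity of $\eta$ and $\sum_{S\subseteq E}\weight((V,S))=\mathcal{P}(G)$, delivers $\sum_{T(H,H')}\pi(I)\pi(F)\le \hat\lambda^{4|K_i|}\pi(H)\le\hat\lambda^{4\ell}\pi(H)$. Combined with $|\gamma_{\sigma,I\to F}|\le m$ and the Metropolis identity $\pi(H)\Psbf(H,H')=\min\{\pi(H),\pi(H')\}/(2m)$, plugging into~\eqref{eqn:congestion} produces the bound $2m^2\hat\lambda^{4\ell}$ whenever $\pi(H)\le\pi(H')$.

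The main obstacle I expect is the `downhill' case $\pi(H)>\pi(H')$, where the raw bound degrades by the ratio $\pi(H)/\pi(H')$. The remedy is a mirror encoding $\eta'(I,F):=(I\oplus F)\oplus H'=A\cup(B\oplus\{e_i\})$ together with the parallel key inequality $w(I)w(F)\le\hat\lambda^{4|K_i|}w(\eta'(I,F))w(H')$. The four $\lambda$-multiplicative applications no longer line up with $H'$ quite as cleanly, because $H'\cap R$ differs from $I\cap R$ on $e_i$; however, one further invocation of $\lambda$-multiplicativity on the `right' subgraph with the two-vertex cut at the endpoints of $e_i$ produces two subgraph-weight ratios that share a common factor (the weight of the isolated edge $e_i$), and this factor cancels exactly and keeps the overall exponent at $4|K_i|$. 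Taking whichever of the two inequalities is tighter, according to the direction of weight between $H$ and $H'$, upgrades the sum bound to $\hat\lambda^{4\ell}\min\{\pi(H),\pi(H')\}$ and yields $\varrho(\Gamma_\sigma)\le 2m^2\hat\lambda^{4\ell}$ as required.
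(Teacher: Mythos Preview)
Your approach is the same as the paper's --- an injective encoding $C=I\oplus F\oplus(\cdot)$ together with the key four-application inequality is exactly the content of the paper's Lemma~\ref{lem:fdiffedge} and its use in the proof of Theorem~\ref{thm:mainedge,lw}. The uphill case is fine.

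The downhill case, however, is handled incorrectly. Your claim that the extra two-vertex-cut invocation ``keeps the overall exponent at $4|K_i|$'' is false: yes, the factor $\weight((\{u,v\},\{e_i\}))$ cancels between the two ratios, but each of the two additional applications of $\lambda$-multiplicativity with the cut $\{u,v\}$ contributes its own $\hat\lambda^{2}$, so you end up with
\[
\weight(I)\weight(F)\ \le\ \hat\lambda^{4|K_i|+4}\,\weight(\eta'(I,F))\,\weight(H'),
\]
not $\hat\lambda^{4|K_i|}$. Since $|K_i|$ can equal $\ell$, this only yields $\varrho(\Gamma_\sigma)\le 2m^2\hat\lambda^{4\ell+4}$, which does not match the stated bound.

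The fix is much simpler than your workaround and is what the paper does: observe that $H'$ is \emph{also} a vertex of the canonical path $\gamma_{\sigma,I\to F}$ (it is $H_j$ when $H=H_{j-1}$), so the key inequality applies to $H'$ directly --- just shift the split to $L'=\{e_1,\dots,e_i\}$, $R'=\{e_{i+1},\dots,e_m\}$. With this split one has $H'\cap L'=F\cap L'$ and $H'\cap R'=I\cap R'$, the cut $K_{i+1}$ still has size at most $\ell$ by the linear-width hypothesis, and the same four clean applications give $\weight(I)\weight(F)\le\hat\lambda^{4\ell}\weight(\eta'(I,F))\weight(H')$. Equivalently: set $\hat H$ to be whichever of $H,H'$ has smaller $\pi$-value, use the encoding $I\oplus F\oplus\hat H$, and invoke the key inequality once with $\hat H$; then $\pi(H)\Psbf(H,H')=\pi(\hat H)/(2m)$ in both cases and the bound $2m^2\hat\lambda^{4\ell}$ follows without any case split in the analysis.
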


\noindent
Theorem~\ref{thm:mainedge,lw} immediately implies a good mixing time bound for the Markov chain $\Msbf$ and hence Theorem~\ref{thm:mainedge,tw} follows.

\begin{corollary}\label{cor:lw}
Let $G=(V,E)$ where $|E| = m$.  If $\weight$ is $\lambda$-multiplicative for some $\lambda > 0$, then the mixing time of $\Msbf$ on $G$ satisfies
\begin{align*}
\tau(\eps) = O\left(m^2 \hat\lambda^{4\lw(G)}
\log(1/\eps)
\right).
\end{align*}
\end{corollary}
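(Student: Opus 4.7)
The plan is to combine the congestion bound of Theorem~\ref{thm:mainedge,lw} with the Sinclair--Diaconis--Stroock estimate~\eqref{eqn:mixingcongestion}, specialised to a linear-width optimal ordering of $E$. The only auxiliary step beyond invoking Theorem~\ref{thm:mainedge,lw} is to control $\log(1/\pisbf(H))$, the other quantity appearing in~\eqref{eqn:mixingcongestion}.

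First I would fix an ordering $\sigma = (e_1,\ldots,e_m)$ of $E$ of linear-width $\lw(G)$, which exists by the definition of $\lw$. Applied at $\ell = \lw(G)$, Theorem~\ref{thm:mainedge,lw} gives $\varrho(\Gamma_\sigma) \le 2m^2\hat\lambda^{4\lw(G)}$.

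To bound $\log(1/\pisbf(H))$ uniformly over $H\in\Osbf$, observe that $\pisbf(H) = \weight((V,H))/\mathcal{P}(G)$ and $\mathcal{P}(G) \le 2^m\max_{S\subseteq E}\weight((V,S))$, so it suffices to control the maximum-to-minimum weight ratio. Here $\lambda$-multiplicativity is used on a two-vertex cut: for a single edge $e\notin S$, specialising~\eqref{eqn:edgekmult} with $K$ equal to the two endpoints of $e$, $V_1=V\setminus K$, $V_2=\emptyset$, and appropriate partition $E_1=S$, $E_2=\{e\}$ shows that the toggle $S\mapsto S\cup\{e\}$ alters $\weight$ by at most the factor $\hat\lambda^2\weight((K,\{e\}))$, where the second factor is an absolute constant determined by $\weight$ alone. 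Iterating over the at most $m$ edges of symmetric difference between any two subsets yields $\log(\max_S\weight((V,S))/\min_H\weight((V,H))) = O(m)$, with hidden constants depending only on $\weight$ and $\lambda$. Hence $\log(1/\pisbf(H)) = O(m)$.

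Plugging both bounds into~\eqref{eqn:mixingcongestion} gives $\tau(\eps) \le 2m^2\hat\lambda^{4\lw(G)}(O(m) + \log(1/\eps))$, from which the claimed bound follows. The one point needing care is the weight-ratio estimate; but this is essentially a definitional unwinding of $\lambda$-multiplicativity in the degenerate case $V_2=\emptyset$, so no serious obstacle arises.
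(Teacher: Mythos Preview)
Your approach matches the paper's: both substitute the congestion bound from Theorem~\ref{thm:mainedge,lw} into Sinclair's inequality~\eqref{eqn:mixingcongestion}. You are in fact more careful than the paper, since you explicitly address the $\log(1/\pisbf(H))$ term, which the paper's one-line proof simply omits.

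That said, your final sentence does not quite close. From $\log(1/\pisbf(H))=O(m)$ you obtain
\[
\tau(\eps)=O\bigl(m^{2}\hat\lambda^{4\lw(G)}(m+\log(1/\eps))\bigr),
\]
which carries an extra factor of $m$ compared with the stated bound $O\bigl(m^{2}\hat\lambda^{4\lw(G)}\log(1/\eps)\bigr)$. This is not a flaw in your reasoning so much as a looseness in the corollary as written; the paper's own proof shares the same gap, since it never bounds $\log(1/\pisbf(H))$ at all. For the downstream use (deriving Theorem~\ref{thm:mainedge,tw} and establishing rapid mixing on graphs of bounded tree-width), the additional polynomial factor is harmless. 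One further small point: your claim that $\weight((K,\{e\}))$ is ``an absolute constant determined by $\weight$ alone'' tacitly assumes $\weight$ is isomorphism-invariant, which holds for the main examples but is not part of the stated hypothesis.
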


\begin{proof}
Substitute the congestion bound of Theorem~\ref{thm:mainedge,lw} into the inequality~\eqref{eqn:mixingcongestion}.
\end{proof}

\begin{proof}[Proof of Theorem~\ref{thm:mainedge,tw}]
Substitute the upper bound on $\lw(G)$ in~\eqref{eqn:width} into Corollary~\ref{cor:lw}.
\end{proof}

In the proof of Theorem~\ref{thm:mainedge,lw}, we will need the following lemma.

\begin{lemma}\label{lem:fdiffedge}
Suppose $G = (V,E)$ has linear-width $\ell$ and let $\sigma = (e_1,\dots,e_m)$ be an ordering of $E$ with linear-width at most $\ell$.  Suppose $I,F\in\Osbf$ and $H$ is on $\gamma_{\sigma,I\to F}$.
If $\weight$ is $\lambda$-multiplicative for some $\lambda > 0$, then
\begin{align*}
\frac{\weight((V,I))\weight((V,F))}{\weight((V,H))\weight((V,C))} \le \hat\lambda^{4\ell},
\end{align*}
where $C = I \oplus F \oplus H$.
\end{lemma}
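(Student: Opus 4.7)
My plan is to exploit a ``swap'' structure revealed by the linear-width ordering.  Suppose $H = H_j$ lies on $\gamma_{\sigma,I\to F} = (H_0,\dots,H_k)$; let $A := \{e_{i_1},\dots,e_{i_j}\}$ be the edges of $I\oplus F$ flipped so far along the path and $B := \{e_{i_{j+1}},\dots,e_{i_k}\}$ the remaining ones, so that $H = I\oplus A$ and $F = H\oplus B$, and consequently $C := I\oplus F\oplus H = F\oplus A$.  The boundary cases $j\in\{0,k\}$ give $\{H,C\}=\{I,F\}$ and the claimed bound reduces to $1\le\hat\lambda^{4\ell}$, so assume $0<j<k$ and set $t := i_j$.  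Split $E$ as $E_L := \{e_1,\dots,e_t\}$ and $E_R := \{e_{t+1},\dots,e_m\}$: because $i_1<\cdots<i_j\le t<i_{j+1}<\cdots<i_k$, we have $A\subseteq E_L$ and $B\subseteq E_R$.  Writing $J_L := J\cap E_L$ and $J_R := J\cap E_R$ for $J\in\{I,F,H,C\}$, a direct check gives the four swap identities
\[
H_L = F_L,\quad H_R = I_R,\quad C_L = I_L,\quad C_R = F_R.
\]

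Next I would use the linear-width hypothesis to produce a small vertex cut matching this edge cut.  Let $K$ be the set of vertices incident to both an edge of $E_L$ and an edge of $E_R$, so $|K|\le\ell$; let $V_L$ be the set of vertices touched only by edges of $E_L$ (with any isolated vertex placed, say, in $V_L$) and $V_R$ the analogous set for $E_R$.  Then $(V_L,K,V_R)$ is a partition of $V$, and $K$ separates $V_L$ from $V_R$, since an edge joining $V_L$ and $V_R$ would have to lie simultaneously in $E_L$ and $E_R$.  Moreover, for each $J\in\{I,F,H,C\}$, the pair $(J_L,J_R)$ is an appropriate partition for this cut, because edges of $J_L\subseteq E_L$ avoid $V_R$ and edges of $J_R\subseteq E_R$ avoid $V_L$.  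Hence $\lambda$-multiplicativity yields, for each such $J$,
\[
\hat\lambda^{-|K|} \le \frac{\weight((V_L\cup K,\,J_L))\,\weight((V_R\cup K,\,J_R))}{\weight((V,J))} \le \hat\lambda^{|K|}.
\]

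Finally, I would combine these four inequalities.  The swap identities guarantee that the side factors $\weight((V_L\cup K,I_L))$, $\weight((V_L\cup K,F_L))$, $\weight((V_R\cup K,I_R))$, $\weight((V_R\cup K,F_R))$ each appear exactly once in the numerator and once in the denominator of the target ratio $\weight((V,I))\weight((V,F))/(\weight((V,H))\weight((V,C)))$, so after cancellation only the $\hat\lambda^{\pm|K|}$ slack from the four applications remains, bounded by $\hat\lambda^{2|K|}\cdot\hat\lambda^{2|K|}=\hat\lambda^{4|K|}\le\hat\lambda^{4\ell}$.  The only conceptually nontrivial step is the swap bookkeeping — recognising $H$ and $C$ as the two crossed combinations of the $E_L$- and $E_R$-restrictions of $I$ and $F$, which is precisely what makes the side factors cancel; everything else, including the verification that $(V_L,K,V_R)$ really is a vertex cut, is routine.
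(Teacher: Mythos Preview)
Your proof is correct and follows essentially the same approach as the paper: the paper likewise sets $Q=\{e_1,\dots,e_{i_j}\}$ (your $E_L$), derives the same swap identities $H\cap Q=F\cap Q$, $H\cap\overline Q=I\cap\overline Q$, $C\cap Q=I\cap Q$, $C\cap\overline Q=F\cap\overline Q$, defines the cut $K$ of size $\le\ell$ in the same way, applies $\lambda$-multiplicativity to each of $I,F,H,C$, and cancels the side factors. Your explicit treatment of the boundary cases $j\in\{0,k\}$ and of isolated vertices is slightly more careful than the paper's exposition, but the argument is the same.
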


\begin{proof}
Since $H$ is on $\gamma_{\sigma,I\to F}$, we may assume that $H = H_j$ for some $j\in\{0,\dots,k\}$.  Let $Q = \{e_1,\dots,e_{i_j}\}$ and $\overline Q = E \setminus Q$. Then
\begin{align}
H = (F\cap Q)\cup(I\cap \overline Q) \quad \text{and} \quad C = (I\cap Q)\cup(F\cap \overline Q). \label{eqn:HCedge}
\end{align}
We can partition $V$ into three sets as follows. Let $V_1$ denote the set of vertices that are incident only to edges in $Q$; let $V_2$ denote the set of vertices that are incident only to edges in $\overline Q$; let $K$ denote the set of remaining vertices, that is, the set of vertices incident to edges in $Q$ and $\overline Q$. Note that $|K|$ is at most the linear-width $\ell$.

No vertex $v_1$ of $V_1$ is adjacent to a vertex $v_2$ of $V_2$, as otherwise the edge between them would simultaneously be in $Q$ and $\overline Q$. 
This implies that $K$ is a vertex cut separating $V_1$ and $V_2$ with respect to $G$, and also with respect to the graphs $(V,I)$, $(V,F)$, $(V,H)$, $(V,C)$.
Furthermore, $(I\cap Q,I\cap\overline Q)$, $(F\cap Q,F\cap\overline Q)$, $(H\cap Q,H\cap\overline Q)$, $(C\cap Q,C\cap\overline Q)$ are edge partitions that are appropriate for $K$.  
Therefore, by the fact that $\weight$ is $\lambda$-muliplicative and $|K| \le \ell$,
\begin{align*}
\hat\lambda^{-\ell} \le \frac{\weight((V_1\cup K,J\cap Q))\weight((V_2\cup K,J\cap\overline Q))}{\weight((V,J))} \le \hat\lambda^\ell \quad \text{for } J\in \{I,F,H,C\}. 
\end{align*}
By~\eqref{eqn:HCedge}, it follows that
\begin{align*}
(V_1\cup K,H\cap Q) &= (V_1\cup K,F\cap Q), \\
(V_2\cup K,H\cap\overline Q) &= (V_2\cup K,I\cap\overline Q), \\
(V_1\cup K,C\cap Q) &= (V_1\cup K,I\cap Q)\quad\text{ and} \\
(V_2\cup K,C\cap\overline Q) &= (V_2\cup K,F\cap\overline Q).
\end{align*}
Now, letting $r$ be
\begin{align*}
\weight((V_1\cup K,I\cap Q))\weight&((V_2\cup K,I\cap\overline Q))
\weight((V_1\cup K,F\cap Q))\weight((V_2\cup K,F\cap\overline Q))
\end{align*}
we obtain
\begin{align*}
\frac{\weight((V,I))\weight((V,F))}{r} \le \hat\lambda^{2\ell} 
\quad \text{and} \quad
\frac{r}{\weight((V,H))\weight((V,C))} \le \hat\lambda^{2\ell}, 
\end{align*}
whereby the lemma easily follows.
\end{proof}

\begin{proof}[Proof of Theorem~\ref{thm:mainedge,lw}]
Let $(H,H')\in\Osbf\times\Osbf$ such that $\Psbf(H,H')>0$.  We will bound the expression within the $\max$ of the definition for $\varrho(\Gamma_\sigma)$.
We let $\hat{H}=H$ if $\pisbf(H)\le\pisbf(H')$ and $\hat{H}=H'$ otherwise.  
Denote by $\cp(H,H')$ the set of pairs $(I,F)$ such that $(H,H')\in\gamma_{\sigma,I\to F}$.  We define the function $\inj: \cp(H,H')\to\Osbf$ by $(I,F) \to I\oplus F\oplus \hat{H}$.  Observe that $\inj$ is an injection, for, given $J\in \Osbf$, we can determine the unique $(I,F)$ such that $\inj(I,F) = J$ by first computing $J\oplus\hat{H}=I\oplus F$ and then using the ordering $\sigma$ to recover $I$ and $F$.  Since $\weight$ is $\lambda$-multiplicative, we have by Lemma~\ref{lem:fdiffedge} that
\begin{align}
\frac{\weight((V,I))\weight((V,F))}{\weight((V,\hat{H}))\weight((V,\inj(I,F)))} \le \hat\lambda^{4\ell}.
\label{eqn:fdiffedge}
\end{align}
Regardless of whether $\pisbf(H)\le\pisbf(H')$ or $\pisbf(H)>\pisbf(H')$, a brief calculation yields that
$\pisbf(H)\Psbf(H,H')
 = \pisbf(\hat{H})/(2m)$; thus,
\begin{align}
\frac{1}{\pisbf(H)\Psbf(H,H')}&\sum_{(I,F)\in\cp(H,H')}\pisbf(I)\pisbf(F)|\gamma_{\sigma,I\to F}|\nonumber\\
& = \frac{2m}{\pisbf(\hat{H})}\sum_{(I,F)\in\cp(H,H')}\pisbf(I)\pisbf(F)|\gamma_{\sigma,I\to F}|\nonumber\\
& \le \frac{2m^2}{\mathcal{P}(G)}\sum_{(I,F)\in\cp(H,H')}\frac{\weight((V,I))\weight((V,F))}{\weight((V,\hat{H}))} \label{line1}\\
& \le \frac{2m^2}{\mathcal{P}(G)}\sum_{(I,F)\in\cp(H,H')}\weight((V,\inj(I,F)))\hat\lambda^{4\ell} \label{line2}\\
& \le 2m^2\hat\lambda^{4\ell}, \label{line3}
\end{align}
where~\eqref{line1} follows from the facts $|\gamma_{\sigma,I\to F}|\le m$ and $\pisbf(S) = \weight((V,S))/\mathcal{P}(G)$,
\eqref{line2} follows from~\eqref{eqn:fdiffedge},
and~\eqref{line3} follows from the fact that $\inj$ is an injection.  Then, substituting the bound~\eqref{line3} into~\eqref{eqn:congestion}, we obtain $\varrho(\Gamma_\sigma) \le 2m^2\hat\lambda^{4\ell}$, as claimed.
\end{proof}

\section{Vertex subset Glauber dynamics for bounded tree-width}\label{sec:induced}

Until now, we had been considering edge subsets (subgraphs) and Glauber transitions which change one edge at a time.  In this section, we modify our methods to treat vertex subsets (induced subgraphs) and transitions that involve one vertex at a time --- each such transition can affect many edges, up to the maximum degree of $G$.  We sketch how to obtain rapid mixing for this process upon graphs of bounded tree-width still with only a modest condition on the base graph polynomials.

A {\em vertex subset expansion formula for $\mathcal{P}$} is written as follows: for any simple graph $G = (V,E)$, 
\begin{align}
\label{eqn:polyvertex}
\mathcal{P}(G)
= \sum_{S\subseteq V}
\weight(G[S])
\end{align}
for some graph function $\weight$, where $G[S]$ denotes the subgraph of $G$ induced by $S$. 
If the function $\weight$ is non-negative, we refer to~\eqref{eqn:polyvertex} as an {\em vertex subset weighting for $\mathcal{P}$} and to $\weight$ as its {\em weight function}.  Again, for our results to hold, aside from some other constraints, we need the weight function to be positive on all induced subgraphs.

From the formulation in~\eqref{eqn:polyvertex}, we define the {\em single site flip chain} $\Mssf$ on a given graph $G = (V,E)$ as follows.  We start with an arbitrary subset $X_0 \subseteq V$ and repeatedly generate $X_{t+1}$ from $X_t$ by running the following experiment.
\begin{enumerate}
\item Pick a vertex $v\in V$ uniformly at random and let $S = X_t \oplus \{v\}$.
\item Set $X_{t+1} = S$ with probability $\frac12\min\left\{1,\weight(G[S])/\weight(G[X_t])\right\}$ and with the remaining probability set $X_{t+1} = X_t$.
\end{enumerate}
We denote the state space of $\Mssf$ by $\Ossf$ (i.e.~$\Ossf = 2^V$) and its transition probability matrix by $\Pssf$.
It can be shown that $\Mssf$ is a reversible Markov chain that has a unique stationary distribution $\pissf$ satisfying
$\pissf(S) \propto \weight(G[S])$.
Hence, we may use $\Mssf$ as a Markov chain in MCMC sampling for the following problem.
\medskip

\noindent
$\PWV(\mathcal{P})$: \textsc{$\mathcal{P}$-weighted Vertex Subsets}\\
\indent Input: a graph $G = (V, E)$.\\
\indent Output: a subset $S\subseteq V$ with probability $\weight(G[S])/\mathcal{P}(G)$.
\medskip

We now describe the condition required of the weight function $\weight$ in~\eqref{eqn:polyvertex}.
For fixed $\lambda > 0$, we say that the weight function $\weight$ is {\em vertex $\lambda$-multiplicative}, if for any $G = (V,E)$ and $K$ a vertex cut that separates sets $ V_1$ and $V_2$ with respect to $G$, we have
\begin{align}\label{eqn:vertexkmult}
\hat\lambda^{-|K|} \le \frac{\weight(G[V_1])\weight(G[V_2\cup K])}{\weight(G)} \le \hat\lambda^{|K|}.
\end{align}
Note that, if $\weight$ is vertex $\lambda$-multiplicative, then it follows that $\weight$ is multiplicative with respect to disjoint union by taking $K = \emptyset$; furthermore, taking $V_2 = \emptyset$ gives that the addition of a few vertices does not change $\weight$ wildly.

The main result of this section is the following.

\begin{theorem}\label{thm:mainvertex,tw}
Let $G=(V,E)$ where $|V| = n$.  If $\weight$ is vertex $\lambda$-multiplicative for some $\lambda > 0$, then the mixing time of $\Mssf$ on $G$ satisfies
\begin{align*}
\tau(\eps) = O\left(n^{2+4(\tw(G)+1)|\log\lambda|}
\log(1/\eps)
\right).
\end{align*}
\end{theorem}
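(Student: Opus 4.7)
The plan is to follow the same template as the proof of Theorem~\ref{thm:mainedge,tw}: first establish a congestion bound for $\Mssf$ in terms of a vertex-ordering width parameter, and then translate into tree-width. The natural vertex analogue of linear-width is the vertex separation number, which is known to coincide with path-width $\pw(G)$ and hence satisfies the chain of inequalities~\eqref{eqn:width}. Fix an ordering $\sigma = (v_1,\ldots,v_n)$ of $V$ of vertex separation at most $\ell$, so that for every $j$ the set $K_j$ of vertices in $\{v_1,\ldots,v_j\}$ that have a neighbor in $\{v_{j+1},\ldots,v_n\}$ has $|K_j|\le\ell$. Define canonical paths $\gamma_{\sigma,I\to F}$ for the chain $\Mssf$ by flipping the vertices in $I\oplus F$ one at a time in the order inherited from $\sigma$, so that each intermediate state satisfies $H=(F\cap Q)\cup(I\cap\overline Q)$ and $C:=I\oplus F\oplus H=(I\cap Q)\cup(F\cap\overline Q)$ for the prefix $Q$ of $\sigma$ determined by $H$.

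Everything reduces to establishing a vertex analogue of Lemma~\ref{lem:fdiffedge}:
\begin{align*}
\frac{\weight(G[I])\weight(G[F])}{\weight(G[H])\weight(G[C])}\le\hat\lambda^{4\ell}.
\end{align*}
Once this is in hand, the proof of Theorem~\ref{thm:mainedge,lw} transfers essentially word-for-word: the same injection $\inj(I,F):=I\oplus F\oplus\hat H$, together with the identity $\pissf(H)\Pssf(H,H')=\pissf(\hat H)/(2n)$ (the $2n$ now arising because step~1 of $\Mssf$ chooses uniformly among the $n$ vertices of $G$), gives $\varrho(\Gamma_\sigma)\le 2n^2\hat\lambda^{4\ell}$. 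Plugging this into Sinclair's inequality~\eqref{eqn:mixingcongestion} yields $\tau(\eps)=O\bigl(n^2\hat\lambda^{4\pw(G)}\log(1/\eps)\bigr)$, and substituting the path-width upper bound from~\eqref{eqn:width} then delivers Theorem~\ref{thm:mainvertex,tw}.

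To prove the key inequality, at a split position $Q=\{v_1,\ldots,v_{i_j}\}$, partition $V=V_1\cup K\cup V_2$ with $V_2:=\overline Q$, $K:=K_{i_j}$ (of size at most $\ell$), and $V_1:=Q\setminus K$. By construction no vertex of $V_1$ has a neighbor in $V_2$, so $K$ is a vertex cut separating $V_1$ from $V_2$ in $G$. Writing $A:=V_1\cap I$, $A':=V_1\cap F$, $B:=V_2\cap I$, $B':=V_2\cap F$, $L:=K\cap I$, $L':=K\cap F$, the four states decompose as $I=A\cup L\cup B$, $F=A'\cup L'\cup B'$, $H=A'\cup L'\cup B$, $C=A\cup L\cup B'$, and for every $S\in\{I,F,H,C\}$ the restriction $K\cap S$ is a vertex cut in $G[S]$ separating $S\cap V_1$ from $S\cap V_2$. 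Apply vertex $\lambda$-multiplicativity to each of $G[I],G[F],G[H],G[C]$ in turn.

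The main obstacle, and the point of departure from the edge case, is that whereas the edge analogue produces factors supported on the two fixed vertex sets $V_1\cup K$ and $V_2\cup K$ (independent of $S$), so that cross-products between $\{I,F\}$ and $\{H,C\}$ cancel on the nose, in the vertex case the second factor $G[(S\cap V_2)\cup(S\cap K)]$ has a vertex set that varies with $S$ through $L$ or $L'$. To recover cancellation, apply vertex $\lambda$-multiplicativity a second time within $G[(S\cap V_2)\cup(S\cap K)]$ using the trivial cut $S\cap K$ separating $S\cap V_2$ from the empty set, thereby splitting it as $G[S\cap V_2]\cdot G[S\cap K]$. After this refinement one obtains a three-factor decomposition $\weight(G[S])\sim\weight(G[S\cap V_1])\weight(G[S\cap V_2])\weight(G[S\cap K])$ for each $S$, and the resulting singleton factors $\{G[A],G[A'],G[B],G[B'],G[L],G[L']\}$ pair up identically on the numerator and denominator of the target ratio. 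Careful accounting of the accumulated multiplicativity errors, using $|L|,|L'|\le|K|\le\ell$, then yields the bound $\hat\lambda^{4\ell}$, and combining with the reductions in the second paragraph completes the proof.
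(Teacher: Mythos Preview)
Your global strategy matches the paper exactly: pass to vertex separation (equal to path-width), define canonical paths by flipping vertices in the order $\sigma$, bound the congestion via an encoding $\inj(I,F)=I\oplus F\oplus\hat H$, and then feed the result through Sinclair's inequality and~\eqref{eqn:width}.

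The divergence is in the proof of the key lemma, and here you have created an obstacle that is not really there. The definition~\eqref{eqn:vertexkmult} is asymmetric in $V_1$ and $V_2$: the cut $K$ is attached only to the $V_2$-side. Since by swapping the labels $V_1\leftrightarrow V_2$ you may attach $K$ to either side, the natural choice is to attach it to $Q$, not to $\overline Q$. Concretely, take $V_1=\overline Q$, $V_2=Q\setminus K$, so that $V_2\cup K=Q$. One application of vertex $\lambda$-multiplicativity to each of $G[I],G[F],G[H],G[C]$ then yields the factors $\weight(G[J\cap Q])$ and $\weight(G[J\cap\overline Q])$, and by~\eqref{eqn:HCvertex} these coincide pairwise between $\{I,F\}$ and $\{H,C\}$ exactly as in the edge case --- no second decomposition is needed. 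This is precisely what the paper does in Lemma~\ref{lem:fdiffvertex}, and it gives $\hat\lambda^{4\ell}$ directly.

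Your two-stage route is correct in that the three-factor products do cancel, but the error accounting does not give what you claim. Each of the two applications to $G[S]$ contributes an error factor up to $\hat\lambda^{|K\cap S|}$, so the per-state error is $\hat\lambda^{2|K\cap S|}$, and summing over $S\in\{I,F,H,C\}$ yields an exponent $2(|L|+|L'|+|L'|+|L|)=4(|L|+|L'|)\le 8\ell$. Thus your argument as written only delivers $\hat\lambda^{8\ell}$, hence $\varrho(\Gamma_\sigma)\le 2n^2\hat\lambda^{8\ell}$ and a final exponent of $2+8(\tw(G)+1)|\log\lambda|$ rather than the $2+4(\tw(G)+1)|\log\lambda|$ asserted in the theorem. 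To recover the stated constant, simply attach $K$ to the $Q$-side and drop the second application.
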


\subsection{A sketch of the proof}

As before, we do not treat tree-width directly, but instead work with a different width parameter.
For any graph $G=(V,E)$, an ordering  $(v_1,\dots,v_n)$ of $V$ has vertex-separation at most $\ell$, if, for each $i \in \{1,\dots,n\}$, there are at most $\ell$ vertices in $\{v_1,\dots,v_{i-1}\}$ that are adjacent to a vertex in $\{v_i,\dots,v_n\}$. 
The {\em vertex-separation $\vs(G)$} of $G = (V,E)$ is the smallest integer $\ell$ such that there is an ordering of $V$ with vertex-separation at most $\ell$. 
It was shown by Kinnersley~\cite{Kin92} that the vertex-separation of $G$ satisfies $\vs(G) = \pw(G)$, and so the inequalities in~\eqref{eqn:width} remain relevant.

To bound the mixing time of $\Mssf$, we again follow a canonical paths argument.
Given $G = (V,E)$, let $\sigma = (v_1,\ldots,v_n)$ be an ordering of $V$.  Given $I,F \in \Ossf$, let $I\oplus F$ denote the symmetric difference of $I$ and $F$, let $\sigma[I\oplus F] := (v_{i_1},\dots,v_{i_k})$ denote the restriction of $\sigma$ to $I\oplus F$ (that is, $\{v_{i_1},\dots,v_{i_k}\} = I\oplus F$ and $i_1<\cdots<i_k$), and let $\gamma_{\sigma,I\to F}$ denote the {\em canonical path} from $I$ to $F$, defined as
\begin{align*}
\gamma_{\sigma,I\to F} := (H_0,\dots,H_k),
\end{align*}
where $H_0 = I$, $H_j=H_{j-1}\oplus \{v_{i_j}\}$ for all $j\in\{1,\dots,k\}$ (and hence $H_k =F$).  Let $\Gamma_\sigma = \{\gamma_{\sigma,I\to F} \ | \ I, F \in \Omega\}$.
Using inequality~\eqref{eqn:mixingcongestion}, our bound on the mixing time again follows from a bound on the {\em congestion $\varrho(\Gamma_\sigma)$}, which is defined analogously to~\eqref{eqn:congestion}.

\begin{theorem}\label{thm:mainvertex,vs}
Suppose $G = (V,E)$ has vertex-separation $\ell$. Let $\sigma = (v_1,\dots,v_n)$ be an ordering of $V$ with vertex-separation at most $\ell$.  If, for some $\lambda > 0$, $\weight$ is vertex $\lambda$-multiplicative, then $\varrho(\Gamma_\sigma) \le 2n^2\hat\lambda^{4\ell}$.
\end{theorem}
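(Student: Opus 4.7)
The plan is to mirror the edge-case argument for Theorem~\ref{thm:mainedge,lw} almost verbatim, with the necessary modifications coming from working with induced subgraphs and from the asymmetric form of vertex $\lambda$-multiplicativity. The central task is to prove a vertex analog of Lemma~\ref{lem:fdiffedge}: namely, for $I,F\in\Ossf$ and any $H$ on the canonical path $\gamma_{\sigma,I\to F}$,
\begin{align*}
\frac{\weight(G[I])\weight(G[F])}{\weight(G[H])\weight(G[C])} \le \hat\lambda^{4\ell},
\qquad \text{where } C = I\oplus F\oplus H.
\end{align*}
Once this is in hand, the congestion calculation runs in parallel to the edge proof, with $m$ replaced by $n$ (since we now pick vertices uniformly at random).

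To prove the lemma, I would fix the position $j$ of $H=H_j$ on the canonical path and let $Q=\{v_1,\dots,v_{i_j}\}$, $\overline{Q}=V\setminus Q$. As in the edge case, this yields $H\cap Q=F\cap Q$, $H\cap \overline{Q}=I\cap \overline{Q}$, $C\cap Q=I\cap Q$, $C\cap \overline{Q}=F\cap \overline{Q}$. Now let $K$ be the set of vertices of $Q$ with a neighbour in $\overline{Q}$; by the definition of vertex-separation, $|K|\le\ell$, and $K$ is a vertex cut of $G$ separating $V_1:=Q\setminus K$ from $V_2:=\overline{Q}$. For each $J\in\{I,F,H,C\}$, the set $J\cap K$ is a vertex cut of $G[J]$ separating $J\cap V_1$ from $J\cap V_2$, since no edge of $G$ (and hence of $G[J]$) joins $V_1$ to $V_2$. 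Applying vertex $\lambda$-multiplicativity to $G[J]$ with this cut — with the roles of $V_1$ and $V_2$ swapped so that $K$ appears on the $Q$-side — yields
\begin{align*}
\hat\lambda^{-\ell}\;\le\;\frac{\weight(G[J\cap \overline{Q}])\,\weight(G[J\cap Q])}{\weight(G[J])}\;\le\;\hat\lambda^{\ell},
\end{align*}
using $V_1\cup K=Q$ and $|J\cap K|\le\ell$. Setting $r:=\weight(G[I\cap Q])\weight(G[I\cap \overline{Q}])\weight(G[F\cap Q])\weight(G[F\cap \overline{Q}])$, the four inequalities (two for $I,F$ in the numerator and two for $H,C$ in the denominator, the latter re-expressed via the identities $H\cap Q=F\cap Q$ etc.) combine to give $\weight(G[I])\weight(G[F])/r\le\hat\lambda^{2\ell}$ and $r/(\weight(G[H])\weight(G[C]))\le\hat\lambda^{2\ell}$, whence the claim.

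With the lemma in place, I would repeat the congestion argument from the proof of Theorem~\ref{thm:mainedge,lw} verbatim: define the injection $\inj(I,F)=I\oplus F\oplus \hat H$ on $\cp(H,H')$, observe $\pissf(H)\Pssf(H,H')=\pissf(\hat H)/(2n)$ (the only real change: $n$ replaces $m$ because the proposal picks a uniformly random vertex), bound $|\gamma_{\sigma,I\to F}|\le n$, apply the lemma inside the sum, and finally use injectivity of $\inj$ to collapse the sum into a bound on $\mathcal{P}(G)$. The result is $\varrho(\Gamma_\sigma)\le 2n^2\hat\lambda^{4\ell}$.

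The main obstacle — and the only place where the vertex setting is not a literal translation of the edge setting — is the asymmetric shape of vertex $\lambda$-multiplicativity, which attaches $K$ to only one of the two sides. One must notice that this is not actually a restriction, because the relation can be applied with $V_1$ and $V_2$ swapped; choosing the orientation so that the cut gets absorbed into the $Q$-half produces a clean factorisation that matches the $J\cap Q$ / $J\cap \overline{Q}$ decomposition dictated by the canonical path. Aside from that bookkeeping, the proof is structurally identical to the edge case.
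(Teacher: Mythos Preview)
Your proposal is correct and matches the paper's approach essentially verbatim: the paper explicitly omits the proof of Theorem~\ref{thm:mainvertex,vs} as being parallel to that of Theorem~\ref{thm:mainedge,lw}, supplying only the vertex analogue of Lemma~\ref{lem:fdiffedge} (Lemma~\ref{lem:fdiffvertex}), whose argument---defining $Q=\{v_1,\dots,v_{i_j}\}$, taking $K$ to be the vertices of $Q$ with a neighbour in $\overline{Q}$, and applying vertex $\lambda$-multiplicativity so that $K$ is absorbed into the $Q$-side---is exactly what you wrote. Your observation about the asymmetry of~\eqref{eqn:vertexkmult} and the freedom to swap $V_1,V_2$ is the one nontrivial point, and you handled it correctly.
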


\noindent
Theorem~\ref{thm:mainvertex,vs} immediately implies a good mixing time bound for the Markov chain $\Mssf$ and hence Theorem~\ref{thm:mainvertex,tw} also.

\begin{corollary}\label{cor:vs}
Let $G=(V,E)$ where $|V| = n$.  If $\weight$ is vertex $\lambda$-multiplicative for some $\lambda > 0$, then the mixing time of $\Mssf$ on $G$ satisfies
\begin{align*}
\tau(\eps) = O\left(n^2 \hat\lambda^{4\vs(G)}
\log(1/\eps)
\right).
\end{align*}
\end{corollary}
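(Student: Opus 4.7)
The plan is to follow the exact template used for Corollary~\ref{cor:lw} in the edge-subset setting, namely to feed the congestion bound from Theorem~\ref{thm:mainvertex,vs} into Sinclair's mixing-time inequality~\eqref{eqn:mixingcongestion}. The argument is essentially one line, since Theorem~\ref{thm:mainvertex,vs} does all of the combinatorial work.

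First I would fix an ordering $\sigma = (v_1,\dots,v_n)$ of $V$ realising the vertex-separation $\vs(G)$, which exists by definition. Theorem~\ref{thm:mainvertex,vs} then supplies a family $\Gamma_\sigma$ of canonical paths whose congestion satisfies $\varrho(\Gamma_\sigma) \le 2n^2 \hat\lambda^{4\vs(G)}$. Substituting this into~\eqref{eqn:mixingcongestion}, applied now to $\Mssf$ and $\pissf$, yields
\[
\tau(\eps) \le 2n^2\hat\lambda^{4\vs(G)}\cdot\left(\max_{H\in\Ossf}\log\frac{1}{\pissf(H)} + \log\frac{1}{\eps}\right).
\]

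The only remaining step is to confirm that $\log(1/\pissf(H))$ is polynomial in $n$ so it may be absorbed into the stated asymptotics. For this I would invoke vertex $\lambda$-multiplicativity with $K$ a single vertex and $V_2 = \emptyset$: adding or removing one vertex at a time changes $\weight$ by at most a factor of $\hat\lambda$ per step, so for any $H \subseteq V$ we obtain $\weight(G)/\weight(G[H]) \le \hat\lambda^n$. Since $\pissf(H) = \weight(G[H])/\mathcal{P}(G) \ge \weight(G[H])/(2^n \max_{S} \weight(G[S]))$, this gives $\log(1/\pissf(H)) = O(n \log \hat\lambda)$, which folds into the big-$O$ to produce the claimed bound. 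There is essentially no obstacle beyond this routine bookkeeping; the genuine content lies entirely in Theorem~\ref{thm:mainvertex,vs}, whose proof presumably mirrors that of Theorem~\ref{thm:mainedge,lw} with the role of the edge ordering replaced by the vertex ordering witnessing $\vs(G)$ and the role of Lemma~\ref{lem:fdiffedge} replaced by a vertex analog based on the cut $K$ supplied by the vertex-separation ordering.
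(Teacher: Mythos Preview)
Your approach is essentially identical to the paper's: the paper's proof is the single sentence ``Substitute the congestion bound [of Theorem~\ref{thm:mainvertex,vs}] into the inequality~\eqref{eqn:mixingcongestion}.'' Your additional bookkeeping for $\log(1/\pissf(H))$ via vertex $\lambda$-multiplicativity is a detail the paper silently suppresses, so you are, if anything, slightly more careful than the original.
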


\begin{proof}
Substitute the congestion bound in~\eqref{eqn:mixingcongestion} into Theorem~\ref{thm:mainvertex,vs}.
\end{proof}

\begin{proof}[Proof of Theorem~\ref{thm:mainvertex,tw}]
Substitute the upper bound on $\vs(G)=\pw(G)$ in~\eqref{eqn:width} into Corollary~\ref{cor:vs}.
\end{proof}

We omit the proof of Theorem~\ref{thm:mainvertex,vs} as it is similar to that of Theorem~\ref{thm:mainedge,lw}, but give the details for the analogue of Lemma~\ref{lem:fdiffedge}.

\begin{lemma}\label{lem:fdiffvertex}
Suppose $G = (V,E)$ has vertex-separation $\ell$ and let $\sigma = (v_1,\dots,v_n)$ be an ordering of $V$ with vertex-separation at most $\ell$.  Suppose $I,F\in\Ossf$ and $H$ is on $\gamma_{\sigma,I\to F}$.
If $\weight$ is vertex $\lambda$-multiplicative for some $\lambda > 0$, then
\begin{align*}
\frac{\weight(G[I])\weight(G[F])}{\weight(G[H])\weight(G[C])} \le \hat\lambda^{4\ell},
\end{align*}
where $C = I \oplus F \oplus H$.
\end{lemma}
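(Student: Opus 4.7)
The plan is to mirror the proof of Lemma~\ref{lem:fdiffedge}, adapted to the vertex setting. Since $H$ lies on $\gamma_{\sigma,I\to F}$, I would first write $H = H_j$ for some $j \in \{0,\dots,k\}$, set $Q = \{v_1,\dots,v_{i_j}\}$ and $\overline Q = V\setminus Q$, and read off the canonical-path identities
\[
H\cap Q = F\cap Q,\quad H\cap\overline Q = I\cap\overline Q,\quad C\cap Q = I\cap Q,\quad C\cap\overline Q = F\cap\overline Q.
\]

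Next I would invoke the vertex-separation hypothesis to produce a small vertex cut in $G$. Let $K$ be the set of vertices in $Q$ that are adjacent in $G$ to some vertex of $\overline Q$, so that $|K|\le\ell$. Setting $V_1 := Q\setminus K$ and $V_2 := \overline Q$ gives a partition $V = V_1\cup K\cup V_2$ in which $K$ separates $V_1$ from $V_2$. The observation I would use repeatedly is that for \emph{any} $J\subseteq V$, the set $K\cap J$ is a vertex cut in the induced subgraph $G[J]$ separating $J\cap V_1$ from $J\cap V_2$, simply because the edge set of $G[J]$ is a subset of the edge set of $G$.

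The heart of the argument is four applications of vertex $\lambda$-multiplicativity, one for each $J\in\{I,F,H,C\}$, applied to $G[J]$ with cut $K\cap J$. The orientation of the cut is crucial: I would attach $K\cap J$ to the side containing $J\cap V_1$, so that the combined piece simplifies to $(J\cap V_1)\cup(J\cap K) = J\cap Q$. This yields
\[
\hat\lambda^{-|J\cap K|}\le\frac{\weight(G[J\cap Q])\,\weight(G[J\cap\overline Q])}{\weight(G[J])}\le\hat\lambda^{|J\cap K|}.
\]
Multiplying the four resulting inequalities (two in the numerator direction, two in the denominator direction) and substituting the canonical-path identities, every factor of the form $\weight(G[J\cap Q])$ or $\weight(G[J\cap\overline Q])$ appears once in the numerator and once in the denominator, and hence cancels exactly. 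What remains is a pure error factor of $\hat\lambda^{|I\cap K|+|F\cap K|+|H\cap K|+|C\cap K|}$. Since $K\subseteq Q$ forces $H\cap K = F\cap K$ and $C\cap K = I\cap K$, and each term is at most $|K|\le\ell$, the exponent is at most $4\ell$, delivering the claimed bound.

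The one subtle point I expect to have to justify carefully is the orientation choice in the third step. If one instead attaches $K\cap J$ to the $V_2$-side, the residual pieces take the form $(J\cap V_2)\cup(J\cap K)$ whose $K$-component takes two distinct values across the four sets $\{I,F,H,C\}$, destroying the exact cancellation and producing only a weaker $\hat\lambda^{8\ell}$ bound. The freedom to place $K$ on either side of the cut follows from the obvious symmetry of the vertex $\lambda$-multiplicativity inequality under relabelling of $V_1$ and $V_2$.
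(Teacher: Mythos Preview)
Your proposal is correct and follows essentially the same route as the paper's proof. The only cosmetic difference is a swap of labels: you set $V_1 = Q\setminus K$ and $V_2 = \overline Q$, whereas the paper takes $V_1 = \overline Q$ and $V_2 = Q\setminus K$; consequently you must invoke the $(V_1,V_2)$-symmetry of the vertex $\lambda$-multiplicativity inequality to attach $K\cap J$ to your $V_1$-side, while in the paper's labeling $V_2\cup K = Q$ and the definition applies directly.
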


\begin{proof}
Since $H$ is on $\gamma_{\sigma,I\to F}$, we may assume that $H = H_j$ for some $j\in\{0,\dots,k\}$.  Let $Q = \{v_1,\dots,v_{i_j}\}$, and $\overline{Q} = V\setminus Q$. Then
\begin{align}
H = (F\cap Q)\cup(I\cap\overline{Q}) \quad \text{and} \quad C = (I\cap Q)\cup(F\cap\overline{Q}). \label{eqn:HCvertex}
\end{align}
We can partition $V$ into three sets as follows. Let $V_1$ denote the set of vertices $\overline{Q}$; let $V_2$ denote the subset of $Q$ containing vertices adjacent only to other vertices of $Q$; and let $K$ denote the set of remaining vertices, that is, the set of vertices of $Q$ incident to vertices of $V_1$.  Note that $|K|$ is at most the vertex-separation $\ell$.

Clearly, $K$ is a vertex cut separating $V_1$ and $V_2$ with respect to $G$ and also with respect to the graphs $G[I]$, $G[F]$, $G[H]$, $G[C]$. 
Therefore, by the fact that $\weight$ is vertex $\lambda$-multiplicative, and noting that $V_2\cup K = Q$,
\begin{align*}
\hat\lambda^{-\ell} \le \frac{\weight(G[Q\cap J])\weight(G[\overline{Q}\cap J])}{\weight(G[J])} \le \hat\lambda^\ell \quad \text{for } J\in \{I,F,H,C\}. 
\end{align*}
By~\eqref{eqn:HCvertex}, it follows that $H\cap Q = F\cap Q$, $H\cap\overline{Q} = I\cap\overline{Q}$, $C\cap Q = I\cap Q$ and $C\cap\overline{Q} = F\cap\overline{Q}$.
Now, letting $r = \weight(G[Q\cap I])\weight(G[\overline{Q}\cap I])\weight(G[Q\cap F])\weight(G[\overline{Q}\cap F])$,
we obtain that
\begin{align*}
\frac{\weight(G[I])\weight(G[F])}{r} \le \hat\lambda^{2\ell} 
\quad \text{and} \quad
\frac{r}{\weight(G[H])\weight(G[C])} \le \hat\lambda^{2\ell},
\end{align*}
whereby the lemma easily follows.
\end{proof}

\subsection{An example of a vertex subset chain}\label{subsec:exvertex}

Recalling~\eqref{eqn:interpoly}, for fixed $x,y > 1$, $\weight(G[S]) := (x-1)^{\rank_2(S)} (y-1)^{|V|-\rank_2(S)}$ gives a vertex subset weighting for $q(G;x,y)$.
With arguments very similar to those given in Subsection~\ref{subsec:kmultex}, it is not difficult to verify that this weight function is vertex $\lambda$-multiplicative.
So, by Theorem~\ref{thm:mainvertex,tw}, it follows that a natural Markov chain derived from the bivariate interlace polynomial --- a chain which has not been studied extensively, as far as we are aware --- mixes rapidly on tree-width-bounded graphs.

\begin{corollary}\label{cor:consequences,interpoly}
Let $G=(V,E)$ where $|V| = n$.  If $x,y>1$ are fixed, then for the single site flip chain on $G$ associated with the weighting~\eqref{eqn:interpoly} of $q(G;x,y)$, the mixing time satisfies
\begin{align*}
\tau(\eps) = O\left(n^{2+8(\tw(G)+1)|\log ((x-1)/(y-1))|}
\log(1/\eps)
\right).
\end{align*}
\end{corollary}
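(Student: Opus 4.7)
The plan is to apply Theorem~\ref{thm:mainvertex,tw} once we confirm that the weight function
$$\weight(H):=(x-1)^{\rank_2(H)}(y-1)^{|V(H)|-\rank_2(H)},$$
which under $\mathcal{P}=q(G;x,y)$ corresponds to the weighting~\eqref{eqn:interpoly}, is vertex $\lambda$-multiplicative for an appropriate $\lambda$. Fix $G=(V,E)$ and a vertex cut $K$ separating $V_1$ and $V_2$. Because $|V_1|+|V_2|+|K|=|V|$, the $(y-1)$-exponents almost completely cancel in the ratio appearing in~\eqref{eqn:vertexkmult}, and after collecting like terms one obtains
$$\frac{\weight(G[V_1])\weight(G[V_2\cup K])}{\weight(G)}=\left(\frac{x-1}{y-1}\right)^{\rank_2(G[V_1])+\rank_2(G[V_2\cup K])-\rank_2(G)}.$$

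The crux is the $\mathbb{F}_2$-rank inequality
$$\bigl|\rank_2(G[V_1])+\rank_2(G[V_2\cup K])-\rank_2(G)\bigr|\le 2|K|.$$
I would prove this by ordering the vertices of $G$ as $V_1,K,V_2$ and writing the adjacency matrix $A(G)$ as a $3\times 3$ block matrix; the $V_1\times V_2$ and $V_2\times V_1$ blocks vanish because $K$ is a vertex cut. Let $M'$ denote the matrix obtained from $A(G)$ by zeroing out the $V_1\times K$ and $K\times V_1$ cross-blocks. Then $M'$ is block-diagonal with diagonal blocks $A(G[V_1])$ and $A(G[V_2\cup K])$, so $\rank_2(M')=\rank_2(G[V_1])+\rank_2(G[V_2\cup K])$. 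On the other hand, $A(G)-M'$ is the sum of a matrix of rank at most $|K|$ and its transpose, hence has $\mathbb{F}_2$-rank at most $2|K|$. A further application of subadditivity of rank yields the displayed bound. This is the vertex-cut analogue of the adjacency-matrix argument already used for the adjacency-rank polynomial in Subsection~\ref{subsec:kmultex}.

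It then follows that $\weight$ is vertex $\lambda$-multiplicative with $\lambda:=((x-1)/(y-1))^2$, and so $|\log\lambda|=2|\log((x-1)/(y-1))|$. Substituting into Theorem~\ref{thm:mainvertex,tw} produces the claimed mixing-time estimate. The principal obstacle is the rank bound; once that is in hand, everything else is a direct substitution, and the same qualitative picture as for the Tutte and $R_2$ polynomials goes through.
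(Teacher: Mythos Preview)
Your approach is correct and is exactly what the paper has in mind: verify vertex $\lambda$-multiplicativity for the interlace weight by bounding $|\rank_2(G[V_1])+\rank_2(G[V_2\cup K])-\rank_2(G)|\le 2|K|$ via the block-matrix argument analogous to the $R_2$-case in Subsection~\ref{subsec:kmultex}, and then invoke Theorem~\ref{thm:mainvertex,tw} with $\lambda=((x-1)/(y-1))^2$.

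One small wrinkle worth tidying: the weighting in~\eqref{eqn:interpoly} has exponent $|V|-\rank_2(S)$ (with $|V|$ the size of the ambient graph), not $|V(H)|-\rank_2(H)=|S|-\rank_2(S)$ as in your displayed $\weight(H)$, and these two weights define different chains. This does not damage your argument, because the interlace weight equals $(y-1)^{|V|}\cdot((x-1)/(y-1))^{\rank_2(S)}$; the global constant $(y-1)^{|V|}$ drops out of all transition probabilities, so the single site flip chain for~\eqref{eqn:interpoly} is identical to the one for $\weight(H)=((x-1)/(y-1))^{\rank_2(H)}$, and your rank inequality gives vertex $\lambda$-multiplicativity for this weight with the same $\lambda$.
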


We believe that it would be of wider interest to study further properties of this single site flip chain on general graphs, in particular to compare it with known results on the random cluster, Potts and Ising models.

\section{Conclusion}

In this work, we have developed a new general framework of graph polynomials and Markov chains defined via subset expansion formulae for these polynomials, and demonstrated that their dynamics mix rapidly for graphs of bounded tree-width.  On a graph $G$ with $n$ vertices, we have shown a mixing time of order $n^{O(1)}e^{O(\pw(G))} = n^{O(\tw(G))}$.  Our results apply to many of the most prominent and well-known polynomials in the field.
The mixing times of our processes have, respectively, exponential and super-exponential dependencies upon path-width and tree-width.  We ask whether this can be improved, in particular, to achieve something akin to fixed-parameter tractability in terms of tree-width.  

\bibliographystyle{abbrv}
\bibliography{subgraphwidth}

\end{document}